\journal{Statistics \& Probability Letters}
\newtheorem{theorem}{Theorem}[section]
\newtheorem{proposition}[theorem]{Proposition}
\newtheorem{lemma}[theorem]{Lemma}
\newtheorem{remark}[theorem]{Remark}
\makeatletter \@addtoreset{equation}{section} \makeatother
\newcommand{\N}{\mathbb{N}}
\newcommand{\R}{\mathbb{R}}
\newcommand{\PP}{\mathbb{P}}
\newcommand{\EE}{\mathbb{E}}
\newcommand{\nvert}[0]{\, \vert \, }
\newcommand{\bb}[1]{\boldsymbol{#1}}
\begin{document}

\begin{frontmatter}

    \title{A uniform $L^1$ law of large numbers for functions of i.i.d.\hspace{-1mm} random variables that are translated by a consistent estimator}

    \author[a1]{Pierre Lafaye de Micheaux}
    \author[a2]{Fr\'ed\'eric Ouimet\corref{cor1}\fnref{fn1}}

    \address[a1]{School of Mathematics and Statistics, UNSW Sydney, Australia.}
    \address[a2]{D\'epartement de Math\'ematiques et de Statistique, Universit\'e de Montr\'eal, Canada.}

    \cortext[cor1]{Corresponding author}
    \ead{ouimetfr@dms.umontreal.ca}

    \fntext[fn2]{F. Ouimet is supported by a NSERC Doctoral Program Alexander Graham Bell scholarship (CGS D3).}

    \begin{abstract}
        We develop a new $L^1$ law of large numbers where the $i$-th summand is given by a function $h(\cdot)$ evaluated at $X_i - \theta_n$,
        and where $\theta_n \circeq \theta_n(X_1,X_2,\ldots,X_n)$ is an estimator converging in probability to some parameter $\theta\in \R$.
        Under broad technical conditions, the convergence is shown to hold uniformly in the set of estimators interpolating between $\theta$ and another consistent estimator $\theta_n^{\star}$.
        Our main contribution is the treatment of the case where $|h|$ blows up at $0$, which is not covered by standard uniform laws of large numbers.
    \end{abstract}

    \begin{keyword}
        uniform law of large numbers \sep Taylor expansion \sep M-estimators \sep score function
        \MSC[2010]{60F25}
    \end{keyword}

\end{frontmatter}

\section{Introduction}\label{sec:intro}

    Let $X_1, X_2, X_3,\ldots$ be a sequence of i.i.d.\ random variables and consider the statistic $T_n(\theta_n^{\star})$ where the random variable
    \begin{equation*}
        T_n(\theta) \circeq T_n(X_1,X_2,\ldots,X_n; \theta) : \Omega \to \R
    \end{equation*}
    depends on an unknown parameter $\theta\in \R$ for which we have a consistent sequence of estimators $\theta_n^{\star} \circeq \theta_n^{\star}(X_1,X_2,\ldots,X_n)$.
    Assume further that the following first-order Taylor expansion is valid :
    \begin{equation}\label{eq:Taylor.expansion}
        T_n(\theta_n^{\star}) = T_n(\theta) + (\theta_n^{\star} - \theta) \int_0^1 T_n'(\theta + v(\theta_n^{\star} - \theta)) dv,
    \end{equation}
    where
    \begin{equation}\label{eq:derivative.condition}
        T_n'(t) = \frac{1}{n} \sum_{i=1}^n \bb{1}_{\{X_i \neq t\}} h(X_i - t),
    \end{equation}
    and where $h : \R\backslash \{0\} \to \R$ is a measurable function (possibly nonlinear).
    In statistics, one is often interested in knowing if estimating a parameter ($\theta$ here) has an impact on the asymptotic law of a given statistic. See for example the interesting results of \cite{MR885745} in the context of limiting $\chi^2$ $U$ and $V$ statistics. Equations~\eqref{eq:Taylor.expansion} and \eqref{eq:derivative.condition} provide a natural setting for studying the question of whether or not $T_n(\theta_n^{\star}) - T_n(\theta) \to 0$ whenever $\theta_n^{\star}\rightarrow\theta$, as $n\to \infty$.

    Given some regularity conditions on the behavior of $h(\cdot)$ around the origin and in its tails, proving the convergence to $\EE[h(X_1 - \theta)]$, in probability say, of the integral on the right-hand side of \eqref{eq:Taylor.expansion} is often possible under weak assumptions by adapting standard uniform laws of large numbers. For instance, one can use \cite[Theorem~16~$(a)$]{MR1699953}, which was introduced by \cite{MR0054913} and \cite{MR0076246}.  One can also use entropy conditions: see, e.g., \cite[Chapter 3]{MR1739079} and \cite[Section 2.4]{MR1385671}.
    Some of these theorems go back to or evolved from the works of \cite{MR0070871}, \cite{MR0297000}, \cite{MR0288823,MR627861}, \cite{MR757767}, \cite{MR762984} and \cite{MR893902}.
    For extensive notes on the origins of the entropy conditions, we refer the interested reader to \cite[Section 3.8]{MR1739079} and \cite[pp.~36--38]{MR762984}.

    However, when $|h|$ blows up at $0$, namely when $\limsup_{x\to 0} |h(x)| = \infty$, these results are not applicable because the enveloppe function $h^{\text{sup}}(x) \circeq \sup_{t:|t - \theta| < \delta} \bb{1}_{\{x\neq t\}} |h(x - t)|$ is infinite in any small enough neighborhood of $\theta$ and, in particular, $h^{\text{sup}}(X_1)$ is not integrable for the outer measure.

    We faced such a problem when analysing the convergence of score functions in the context of testing the goodness-of-fit of the Laplace distribution with unknown location and scale parameters $(\mu,\sigma)$. If the family of alternatives is taken to be the asymmetric power distribution \citep{MR2395888} or the skewness exponential power distribution \citep{MR1379475}, a score function evaluated at the maximum likelihood estimator $(\mu_n^{\star},\sigma_n^{\star})$ can be used, in the spirit of \citep{MR3004652,Desgagne2017}.
    If the score function is expanded around $(\mu,\sigma)$, then a multivariate version of \eqref{eq:Taylor.expansion} is obtained.
    One of the integrals in the expansion will have an integrand \eqref{eq:derivative.condition} where $h(\cdot)$ contains a logarithmic term.
    Standard uniform laws of large numbers cannot be applied to show the convergence of such integrals because the enveloppe function of the class of functions $\{\log(\, \cdot \, - t)\}_{t:|t-\mu|<\delta}$ is infinite in any small enough neighborhood of $\mu$.
    In section \ref{sec:example}, we show how the main result of this paper (Theorem \ref{thm:probleme.Pierre.uniform}) can be used to prove a crucial part of the problem described above.

    More generally, the main result is that, under broad conditions, one obtains
    \begin{equation}\label{eq:thm:probleme.Pierre.uniform.conclusion.debut}
        \lim_{n\to\infty} \sup_{v\in [0,1]} \EE\left|\frac{1}{n} \sum_{i=1}^n \bb{1}_{\{X_i \neq \theta + v(\theta_n^{\star} - \theta)\}} h(X_i - \theta - v(\theta_n^{\star} - \theta)) - \EE\big[h(X_1 - \theta)\big]\right| = 0.
    \end{equation}
    From \eqref{eq:thm:probleme.Pierre.uniform.conclusion.debut} and the setting above, one can conclude that $T_n(\theta_n^{\star}) - T_n(\theta) \to 0$ in probability as $n\to \infty$.

\section{A new uniform \texorpdfstring{$L^1$}{L1} law of large numbers}\label{sec:new.ULLN}

        Throughout the paper, the labels $(X.k)$, $(H.k)$ and $(E.k)$ denote, respectively, assumptions that we will make on $X_1$, $h(\cdot)$ and $\theta_n$.
        Figure \ref{fig:proof.structure} at the end of the current section illustrates the logical structure of these assumptions and their implications.
        We start by proving a non-uniform version of Theorem \ref{thm:probleme.Pierre.uniform}.

        \begin{proposition}\label{prop:probleme.Pierre}
            Let $\theta\in \R$ and let $X_1, X_2, X_3, \ldots $ be a sequence of i.i.d.\hspace{-1mm} random variables such that
            \begin{description}
                \item[(X.1)] $\PP(X_1 = \theta) = 0$.
            \end{description}
            Let $h : \R\backslash\{0\} \to \R$ be a mesurable function that satisfies
            \begin{description}
                \item[(H.1)] $\PP(X_1 - \theta\in \mathcal{D}_h) = 0$, where $\mathcal{D}_h$ is the set of discontinuity points of $h(\cdot)$,
                \item[(H.2)] $\EE\, |h(X_1 - \theta)| < \infty$.
            \end{description}
            Let $\theta_n \circeq \theta_n(X_1,X_2,\ldots,X_n)$ be an estimator that satisfies
            \begin{description}
                \item[(E.1)] $\theta_n \stackrel{\PP}{\longrightarrow} \theta$,
                \item[(E.2)] \vspace{-1mm}For all $n\in \N$ and all $i\in \{1,2,\ldots,n\}$, $(X_i - \theta_n,X_i - \theta) \stackrel{\text{law}}{=} (X_1 - \theta_n,X_1 - \theta)$,
                \item[(E.3)] There exists $N_0\in \N$ such that $\big\{\bb{1}_{\{X_1 \neq \theta_n\}} h(X_1 - \theta_n)\big\}_{n\geq N_0}$ is uniformly integrable.
            \end{description}
            Then,
            \vspace{-1mm}
            \begin{equation}\label{eq:lem:probleme.Pierre.conclusion}
                \EE\left|\frac{1}{n} \sum_{i=1}^n \bb{1}_{\{X_i \neq \theta_n\}} h(X_i - \theta_n) - \EE\big[h(X_1 - \theta)\big]\right| \longrightarrow 0.
            \end{equation}
        \end{proposition}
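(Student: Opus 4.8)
The plan is to reduce the statement to two ingredients: the classical $L^1$ law of large numbers for the i.i.d.\ sequence $h(X_1-\theta),h(X_2-\theta),\ldots$, and a single one-term convergence that is dealt with by uniform integrability. Since $\PP(X_i=\theta)=0$ for every $i$ by (X.1), the variable $h(X_i-\theta)$ is defined almost surely; writing $S_n\circeq\frac{1}{n}\sum_{i=1}^n\bb{1}_{\{X_i\neq\theta_n\}}h(X_i-\theta_n)$, I would start from the decomposition
\begin{equation*}
S_n-\EE[h(X_1-\theta)]=\underbrace{\frac{1}{n}\sum_{i=1}^n\Big(\bb{1}_{\{X_i\neq\theta_n\}}h(X_i-\theta_n)-h(X_i-\theta)\Big)}_{=:A_n}+\underbrace{\Big(\frac{1}{n}\sum_{i=1}^n h(X_i-\theta)-\EE[h(X_1-\theta)]\Big)}_{=:B_n}.
\end{equation*}
By (H.2) the summands of $B_n$ are i.i.d.\ and integrable, so $\EE|B_n|\to 0$ by the $L^1$ law of large numbers for i.i.d.\ integrable summands. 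Everything then hinges on showing $\EE|A_n|\to 0$.

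For $A_n$, the key observation is that each summand $\bb{1}_{\{X_i\neq\theta_n\}}h(X_i-\theta_n)-h(X_i-\theta)$ is one and the same measurable function of the pair $(X_i-\theta_n,\,X_i-\theta)$, so the distributional identity (E.2) forces all $n$ summands to share the law of the $i=1$ term. Hence the triangle inequality yields, for $n\geq N_0$,
\begin{equation*}
\EE|A_n|\ \leq\ \frac{1}{n}\sum_{i=1}^n\EE\big|\bb{1}_{\{X_i\neq\theta_n\}}h(X_i-\theta_n)-h(X_i-\theta)\big|\ =\ \EE\big|\bb{1}_{\{X_1\neq\theta_n\}}h(X_1-\theta_n)-h(X_1-\theta)\big|,
\end{equation*}
and the whole problem collapses to proving that $\bb{1}_{\{X_1\neq\theta_n\}}h(X_1-\theta_n)\to h(X_1-\theta)$ in $L^1$.

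I would obtain this $L^1$ convergence from convergence in probability plus uniform integrability, via Vitali's convergence theorem. For the convergence in probability I would use the subsequence criterion: given any subsequence, (E.1) lets me pass to a further subsequence along which $\theta_n\to\theta$ almost surely; on the event $\{X_1\neq\theta\}\cap\{X_1-\theta\notin\mathcal{D}_h\}$, which has full probability by (X.1) and (H.1), the value $X_1-\theta$ is a nonzero continuity point of $h$, so along that sub-subsequence $X_1-\theta_n\neq 0$ eventually and $h(X_1-\theta_n)\to h(X_1-\theta)$ by continuity, whence $\bb{1}_{\{X_1\neq\theta_n\}}h(X_1-\theta_n)\to h(X_1-\theta)$ almost surely. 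This establishes the convergence in probability. The family $\{\bb{1}_{\{X_1\neq\theta_n\}}h(X_1-\theta_n)\}_{n\geq N_0}$ is uniformly integrable by (E.3), and subtracting the single integrable variable $h(X_1-\theta)$ (integrable by (H.2)) keeps the differences uniformly integrable; Vitali's theorem then gives $\EE|\bb{1}_{\{X_1\neq\theta_n\}}h(X_1-\theta_n)-h(X_1-\theta)|\to 0$, hence $\EE|A_n|\to 0$, and combining this with $\EE|B_n|\to 0$ proves \eqref{eq:lem:probleme.Pierre.conclusion}.

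The step I expect to be the main obstacle is the convergence-in-probability claim for the single term $\bb{1}_{\{X_1\neq\theta_n\}}h(X_1-\theta_n)$: this is precisely where the blow-up of $|h|$ at the origin must be neutralized, and the argument works only because (X.1) keeps $X_1-\theta$ strictly away from the singularity almost surely while (H.1) places it at a continuity point of $h$, so that composing $h$ with the (subsequentially a.s.\ convergent) perturbation $X_1-\theta_n$ is legitimate and the indicator $\bb{1}_{\{X_1\neq\theta_n\}}$ is eventually equal to $1$. Once this is granted, the reduction through (E.2), the classical law for $B_n$, and the Vitali upgrade are all routine.
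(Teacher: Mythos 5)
Your proof is correct and follows essentially the same route as the paper: reduce via (E.2) and the triangle inequality to the single $i=1$ term, use the $L^1$ law of large numbers for the $h(X_i-\theta)$ part, and upgrade convergence in probability to $L^1$ convergence via uniform integrability from (E.3) and (H.2). The only cosmetic difference is that the paper treats the indicator-correction term $\frac{1}{n}\sum_{i=1}^n \bb{1}_{\{X_i=\theta_n\}}\, h(X_i-\theta)$ separately by first showing $\PP(X_1=\theta_n)\to 0$, whereas you absorb it into the almost-sure subsequence argument by observing that the indicator is eventually equal to $1$.
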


        \begin{remark}
            Condition $(E.2)$ is satisfied for any estimator that is symmetric with respect to its $n$ variables. For example, this is the case for any maximum likelihood estimator that is based on i.i.d.\hspace{-1mm} observations.
        \end{remark}

        \begin{proof}[Proof of Proposition \ref{prop:probleme.Pierre}]
            From $(X.1)$ and $(E.1)$, we know that $\bb{1}_{\{X_1 = \theta_n\}} \hspace{-1mm}\stackrel{\PP}{\longrightarrow} 0$.
            Indeed, for any $\varepsilon > 0$,
            \begin{itemize}
                \item take $\delta \circeq \delta_{\varepsilon} > 0$ such that $\PP(|X_1 - \theta| < \delta) < \varepsilon/2$, and
                \item take $N \circeq N_{\delta,\varepsilon}$ such that for all $n\geq N$, we have $\PP(|\theta_n - \theta| \geq \delta) < \varepsilon/2$.
            \end{itemize}
            We get, for all $n\geq N$,
            \begin{equation*}
                \PP(X_1 = \theta_n) \leq \PP(X_1 = \theta_n, |\theta_n - \theta| < \delta) + \PP(|\theta_n - \theta| \geq \delta) < \varepsilon.
            \end{equation*}
            In particular, this shows $\bb{1}_{\{X_1 = \theta_n\}} |h(X_1 - \theta)| \stackrel{\PP}{\longrightarrow} 0$.
            Since this sequence is uniformly integrable by $(H.2)$, we also have the $L^1$ convergence.
            By using Jensen's inequality and $(E.2)$, we deduce
            \begin{equation}\label{eq:start.1.L1}
                \EE\, \left|\frac{1}{n} \sum_{i=1}^n \bb{1}_{\{X_i = \theta_n\}} h(X_i - \theta)\right| \leq \EE\big[\bb{1}_{\{X_1 = \theta_n\}} |h(X_1 - \theta)|\big] \longrightarrow 0.
            \end{equation}
            By $(H.2)$ and the law of large numbers in $L^1$ (see, e.g., Theorem 1.2.6 in \cite{MR2760872}), we also know that
            \begin{equation}\label{eq:start.2.L1}
                \EE\left|\frac{1}{n} \sum_{i=1}^n h(X_i - \theta) - \EE\big[h(X_1 - \theta)\big]\right| \longrightarrow 0.
            \end{equation}
            \newpage
            \noindent
            By combining \eqref{eq:start.1.L1} and \eqref{eq:start.2.L1}, we have shown
            \begin{equation}\label{eq:start.1.L1.2.L1.combined}
                \EE\left|\frac{1}{n} \sum_{i=1}^n \bb{1}_{\{X_i \neq \theta_n\}} h(X_i - \theta) - \EE\big[h(X_1 - \theta)\big]\right| \longrightarrow 0.
            \end{equation}
            To conclude the proof, we show that
            \begin{equation*}
                Y_n \circeq \frac{1}{n} \sum_{i=1}^n \bb{1}_{\{X_i \neq \theta_n\}} h(X_i - \theta_n) - \frac{1}{n} \sum_{i=1}^n \bb{1}_{\{X_i \neq \theta_n\}} h(X_i - \theta)  \stackrel{L^1}{\longrightarrow} 0.
            \end{equation*}
            From Jensen's inequality and $(E.2)$, we have
            \begin{equation}\label{eq:lem:probleme.Pierre.final.bound}
                \EE|Y_n| \leq \EE\Big[\bb{1}_{\{X_1 \neq \theta_n\}} \big|h(X_1 - \theta_n) - h(X_1 - \theta)\big|\Big].
            \end{equation}
            The sequence $\{\bb{1}_{\{X_1 \neq \theta_n\}} |h(X_1 - \theta_n) - h(X_1 - \theta)|\}_{n\in \N}$ converges to $0$ in probability by $(H.1)$, $(E.1)$ and the continuous mapping theorem \cite[Theorem 2.3]{MR1652247}.
            Furthermore, the sequence is uniformly integrable for $n \geq N_0$ by $(H.2)$, $(E.3)$ and the fact that the sums of random variables coming (respectively) from two uniformly integrable sequences form a uniformly integrable sequence.
            Hence, $Y_n \rightarrow 0$ in $L^1$.
        \end{proof}

        Since the distribution of $X_1 - \theta_n$ is rarely known, condition $(E.3)$ in Proposition \ref{prop:probleme.Pierre} is impractical to verify.
        The next lemma fix this problem.

        \begin{lemma}\label{lem:uniform.integrability}
            Let $\theta\in \R$. Let $X_1,X_2,X_3,\ldots$ be a sequence of i.i.d.\hspace{-1mm} random variables.
            Let $h : \R\backslash\{0\} \to \R$ be a mesurable function. Let $\theta_n \circeq \theta_n(X_1,X_2,\ldots,X_n)$ be an estimator that satisfies
            \begin{description}
                \item[(E.4)] If $\limsup_{x\to 0} |h(x)| < \infty$, we impose no condition. Otherwise, assume that there exist $N_1\in \N$, $\alpha_0 > 0$ and a constant $C_{\alpha_0} > 0$ such that
                    \vspace{-2mm}
                    \begin{equation*}
                        \sup_{n\geq N_1} \sup_{A\in \mathcal{B}_{>0}([-\alpha_0,\alpha_0])} \frac{\PP(X_1 - \theta_n\in A)}{\text{Lebesgue}(A)} \leq C_{\alpha_0} < \infty,
                    \end{equation*}
                    where $\mathcal{B}_{>0}([-\alpha_0,\alpha_0])$ denotes the Borel sets of positive Lebesgue measure on the interval $[-\alpha_0,\alpha_0]$.
                \item[(E.5)] There exist $N_2\geq 2$, $C, \gamma, p > 0$ and $\beta_0 > \gamma$ such that, for $\PP(X_1 - \theta\in \, \cdot\, )$-almost-all $x\in \R$, we have
                    \begin{itemize}
                        \item For all $u \geq (x + \gamma) \vee \beta_0$ and for all $n\geq N_2$,
                            \begin{align*}
                                \PP(\theta_n - \theta \leq x - u \nvert X_1 - \theta = x) \leq C e^{-|x-u|^{p}}.
                            \end{align*}
                        \item For all $u \leq (x - \gamma) \wedge (-\beta_0)$ and for all $n\geq N_2$,
                            \begin{align*}
                                \PP(\theta_n - \theta \geq x - u \nvert X_1 - \theta = x) \leq C e^{-|x-u|^{p}}.
                            \end{align*}
                    \end{itemize}
                \item[(E.6)] There exists $N_3\in \N$ such that for all $n \geq N_3$, there exists $A_n\in \mathcal{B}(\R)$ such that $\PP(X_1 - \theta\in A_n) = 1$ and, for all $x\in A_n$, the conditional measure $\PP(x - (\theta_n - \theta) \in \, \cdot\, \nvert X_1 - \theta = x)$, when restricted to $\{u\in \R : |u| \geq \beta_0, |x - u| > \gamma\}$, is absolutely continuous with respect to the Lebesgue measure.
            \end{description}
            Assume that $h(\cdot)$ satisfies
            \begin{description}
                \item[(H.3)] For all $x_0\in \R\backslash\{0\}$, $\limsup_{x\to x_0} |h(x)| < \infty$,
                \item[(H.4)] $\int_{|u| \leq \alpha_0} |h(u)| du < \infty$,
                \item[(H.5)]
                    \begin{description}
                        \item[1.] $h(\cdot)$ is absolutely continuous on bounded sub-intervals of $(-\infty,-\beta_0) \cup (\beta_0,+\infty)$;
                            \addtolength{\itemindent}{0.25cm}
                        \item[2.] There exists an integrable random variable $M$ such that $\sup_{|t| \leq \gamma}|h(X_1 - \theta - t)|\, \bb{1}_{\{|X_1 - \theta - t| \geq \beta_0\}} \leq M$ $\PP$-almost-surely;
                        \item[3.] $\lim_{|\beta|\to\infty} |h(\beta)| e^{-|x-\beta|^{p}} \hspace{-1mm}= 0$ for $\PP(X_1 - \theta\in \, \cdot\, )$-almost-all $x\in \R$, and $\{|h(\beta)| e^{-|X_1 - \theta - \beta|^{p}}\}_{|\beta| \geq \beta_0}$ is uniformly integrable;
                        \item[4.] $\int_{|u| \geq \beta_0} \EE\big[|h'(u)|\, e^{-|X_1 - \theta - u|^{p}}\big] du < \infty$;
                        \item[5.] For almost-all $|u| \geq \beta_0$, we have $-\text{sign}(u) \text{sign}(h(u)) h'(u) \leq 0$.
                    \end{description}
            \end{description}
            Then, $(E.3)$ from Proposition \ref{prop:probleme.Pierre} is satisfied, namely $\big\{\bb{1}_{\{X_1 \neq \theta_n\}} h(X_1 - \theta_n)\big\}_{n\geq N_0}$ is uniformly integrable, where $N_0 \circeq N_1 \vee N_2 \vee N_3$.
        \end{lemma}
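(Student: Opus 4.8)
The plan is to establish $(E.3)$ directly, i.e.\ to prove
\[
    \sup_{n\geq N_0}\EE\big[\bb{1}_{\{X_1\neq\theta_n\}}\,|h(X_1-\theta_n)|\,\bb{1}_{\{|h(X_1-\theta_n)|>K\}}\big]\longrightarrow 0\qquad\text{as }K\to\infty,
\]
by splitting the expectation according to the position of $X_1-\theta_n$. Replacing $\alpha_0$ by $\alpha_0\wedge\beta_0$ if needed (which preserves $(E.4)$ and $(H.4)$), we may assume $\alpha_0\leq\beta_0$, so the event decomposes along $\{|X_1-\theta_n|\leq\alpha_0\}$, $\{\alpha_0<|X_1-\theta_n|<\beta_0\}$ and $\{|X_1-\theta_n|\geq\beta_0\}$. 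On $\{0<|X_1-\theta_n|\leq\alpha_0\}$, condition $(E.4)$ says the law of $X_1-\theta_n$ restricted to $[-\alpha_0,\alpha_0]$ has density $\leq C_{\alpha_0}$ uniformly in $n\geq N_1$ (and in the non-blow-up case $(H.3)$ already bounds $|h|$ near $0$), so the expectation above is $\leq C_{\alpha_0}\int_{|u|\leq\alpha_0}|h(u)|\,\bb{1}_{\{|h(u)|>K\}}\,du$, which tends to $0$ as $K\to\infty$ by $(H.4)$ and dominated convergence. On $\{\alpha_0<|X_1-\theta_n|<\beta_0\}$, compactness of $\{\alpha_0\leq|u|\leq\beta_0\}\subset\R\backslash\{0\}$ together with $(H.3)$ gives a deterministic bound $|h(X_1-\theta_n)|\leq M_0$, so this piece vanishes once $K>M_0$. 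It remains to treat $\{|X_1-\theta_n|\geq\beta_0\}$.

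Write $X_1-\theta_n=(X_1-\theta)-(\theta_n-\theta)$. On $\{|X_1-\theta_n|\geq\beta_0,\ |\theta_n-\theta|\leq\gamma\}$, taking $t=\theta_n-\theta$ in $(H.5).2$ gives $|h(X_1-\theta_n)|\leq\sup_{|t|\leq\gamma}|h(X_1-\theta-t)|\,\bb{1}_{\{|X_1-\theta-t|\geq\beta_0\}}\leq M$, an integrable bound free of $n$, so this piece is uniformly integrable. On $\{|X_1-\theta_n|\geq\beta_0,\ |\theta_n-\theta|>\gamma\}$, condition on $X_1-\theta=x$ and set $u:=X_1-\theta_n=x-(\theta_n-\theta)$; inspecting the signs of $u$ and of $x-u=\theta_n-\theta$ shows that, off a null set, either $u\geq(x+\gamma)\vee\beta_0$, or $u\leq(x-\gamma)\wedge(-\beta_0)$, or $\beta_0\leq u\leq x$, or $x\leq u\leq-\beta_0$. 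For $u\geq\beta_0$, condition $(H.5).5$ forces $\frac{d}{du}|h(u)|=\text{sign}(h(u))\,h'(u)\geq0$, so $|h|$ is nondecreasing on $[\beta_0,\infty)$ and, symmetrically, nonincreasing on $(-\infty,-\beta_0]$; hence in the last two cases $|h(u)|\leq|h(x)|=|h(X_1-\theta)|\,\bb{1}_{\{|X_1-\theta|\geq\beta_0\}}\leq M$ by $(H.5).2$, and these are again dominated by $M$.

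Only the two overshoot cases remain; take $u\geq a(x):=(x+\gamma)\vee\beta_0$ (the other being symmetric). By $(E.6)$ the conditional law of $u=X_1-\theta_n$ given $X_1-\theta=x$ is absolutely continuous on $\{|u|\geq\beta_0,\ |x-u|>\gamma\}$, with density $f_n(\cdot\nvert x)$, and by $(E.5)$ its upper tail obeys $\bar F_n(u\nvert x):=\PP(X_1-\theta_n\geq u\nvert X_1-\theta=x)=\PP(\theta_n-\theta\leq x-u\nvert X_1-\theta=x)\leq Ce^{-|x-u|^{p}}$ for $u\geq a(x)$. Writing $|h(u)|=|h(b)|+\int_b^u g(r)\,dr$ for $u\geq b\geq\beta_0$ (absolute continuity $(H.5).1$), with $0\leq g(r)=\text{sign}(h(r))\,h'(r)\leq|h'(r)|$, Tonelli's theorem gives, for any $b\geq a(x)$,
\[
    \int_b^\infty |h(u)|\,f_n(u\nvert x)\,du\ \leq\ |h(b)|\,\bar F_n(b\nvert x)+\int_b^\infty g(r)\,\bar F_n(r\nvert x)\,dr\ \leq\ C|h(b)|\,e^{-|x-b|^{p}}+C\int_{|u|\geq\beta_0}|h'(u)|\,e^{-|x-u|^{p}}\,du.
\]
Taking $b=a(x)$, bounding $|h(a(x))|$ by $M$ (when $a(x)=x+\gamma$, via $(H.5).2$ with $t=-\gamma$) or by $|h(\beta_0)|$, then integrating over $x$ against the law of $X_1-\theta$, conditions $(H.5).2$ and $(H.5).4$ yield a finite bound, uniform in $n\geq N_0$, for the expectation of $|h(X_1-\theta_n)|$ restricted to this case. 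To upgrade this to uniform integrability, redo the estimate with $\bb{1}_{\{|h(X_1-\theta_n)|>K\}}$ inserted: unless $|h|$ is bounded on $\{|u|\geq\beta_0\}$ (in which case the whole range is already dominated), $|h|$ is monotone and unbounded there, so $\{u\geq\beta_0:|h(u)|>K\}$ is a half-line $(\sigma_K,\infty)$ with $\sigma_K\uparrow\infty$; then on $\{X_1-\theta<\sigma_K-\gamma\}$ the integral above starts at $b=\sigma_K$ and its expectation is at most
\[
    C\,\EE\big[|h(\sigma_K)|\,e^{-|X_1-\theta-\sigma_K|^{p}}\big]+C\int_{|u|\geq\sigma_K}\EE\big[|h'(u)|\,e^{-|X_1-\theta-u|^{p}}\big]\,du,
\]
which tends to $0$ as $K\to\infty$ by the uniform integrability in $(H.5).3$ (first term) and by $(H.5).4$ (tail of a convergent integral, second term); on the complement $\{X_1-\theta\geq\sigma_K-\gamma\}$, of vanishing probability, the contribution is controlled by integrating the $x$-integrable bound established just above, hence also tends to $0$. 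Summing all the pieces yields the displayed limit, which is exactly $(E.3)$ with $N_0=N_1\vee N_2\vee N_3$.

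The main obstacle is this last paragraph: justifying the Tonelli/integration-by-parts identity rigorously (a possible atom of the conditional law at the boundary point $a(x)$, and the almost-everywhere qualifiers in $(H.5).1$ and $(H.5).5$), and, above all, converting the \emph{uniform bound} on the expectation into \emph{uniform integrability} by pushing the truncation $\bb{1}_{\{|h(X_1-\theta_n)|>K\}}$ through the estimate and splitting according to whether $X_1-\theta<\sigma_K-\gamma$ or not. Everything on the ranges with $|X_1-\theta_n|\leq\beta_0$, and the sub-cases dominated by $M$, is routine by comparison.
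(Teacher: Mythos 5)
Your proof is correct and follows essentially the same route as the paper's: split according to whether $X_1-\theta_n$ lies near $0$, in a compact annulus, or in the tails; use $(E.4)$ and $(H.4)$ near $0$, $(H.3)$ in the middle, $(H.5).2$ when $|\theta_n-\theta|\leq\gamma$, and, in the overshoot region, condition on $X_1-\theta$ and combine $(E.5)$, $(E.6)$ with an integration-by-parts/Tonelli identity and $(H.5).1$--$(H.5).5$. The only differences are organizational: the paper reduces uniform integrability to the two limits $\alpha\to0$ and $\beta\to\infty$ (which makes your $\sigma_K$ truncation device unnecessary) and treats the intermediate ranges through the boundary terms of the integration by parts rather than by invoking the monotonicity of $|h|$ directly, both being minor variants of the same argument.
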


        \begin{remark}\label{rem:lem:uniform.integrability}
            If $X_1 - \theta_n$ has a density for $n$ large enough and, in a neighborhood of $0$, those densities are uniformly bounded from above by the same positive constant, then $(E.4)$ is satisfied. In general, when $\theta_n$ is even only slightly non-trivial, we rarely know the distribution of $X_1 - \theta_n$.
            However, if $\theta_n$ concentrates more and more around $\theta$ as $n\to\infty$ (like most maximum likelihood estimators for instance), then we expect the weight of the distribution of $X_1$ around $\theta$ to dominate the weight of the distribution of $X_1 - \theta_n$ around $0$.
            In that case, we can expect $(E.4)$ to be satisfied when $X_1$ has a regular enough distribution around $\theta$.
            Condition $(E.5)$ is a way to control the tail behavior of $\theta_n$'s distribution for the above heuristic to work.
            Since the lemma is intended to be used when $|h|$ blows up at $0$, condition $(E.4)$ is there to control the distribution of $X_1 - \theta_n$ around $0$.
        \end{remark}

        \begin{proof}
            We want to prove that for $N_0 \circeq N_1 \vee N_2 \vee N_3$, we have
            \begin{equation*}
                \lim_{K\rightarrow\infty} \, \sup_{n\geq N_0} \EE\Big[\big|h(X_1 - \theta_n)\big|\, \bb{1}_{\{X_1 \neq \theta_n\} \cap \{|h(X_1 - \theta_n)| \geq K\}}\Big] = 0.
            \end{equation*}
            By $(H.3)$, $h(\cdot)$ is uniformly bounded on compact subsets of $\R\backslash \{0\}$.
            It is therefore sufficient to show both
            \begin{align}\label{eq:lem:uniform.integrability.to.show.1}
                &\lim_{\alpha\rightarrow 0} \, \sup_{n\geq N_0} \EE\Big[\big|h(X_1 - \theta_n)\big|\, \bb{1}_{\{X_1 \neq \theta_n\} \cap \{|X_1 - \theta_n| \leq \alpha\}}\Big] = 0, \\
                \label{eq:lem:uniform.integrability.to.show.2}
                &\lim_{\beta\rightarrow \infty} \, \sup_{n\geq N_0} \EE\Big[\big|h(X_1 - \theta_n)\big|\, \bb{1}_{\{|X_1 - \theta_n| \geq \beta\}}\Big] = 0.
            \end{align}
            When $\limsup_{x\to 0} |h(x)| < \infty$, then \eqref{eq:lem:uniform.integrability.to.show.1} is trivially satisfied because $h(\cdot)$ is uniformly bounded on compact subsets of $\R$ by $(H.3)$.
            When $\limsup_{x\to 0} |h(x)| = \infty$, then \eqref{eq:lem:uniform.integrability.to.show.1} follows directly from $(E.4)$, $(H.4)$ and the dominated convergence theorem (DCT).

            Assume for the remaining of the proof that
            \begin{equation*}
                n\geq N_0 \quad \text{and} \quad \beta > \beta_0 > \gamma,
            \end{equation*}
            where $\gamma$ and $\beta_0$ are fixed in $(E.5)$.
            Separate the expectation in \eqref{eq:lem:uniform.integrability.to.show.2} in two parts :
            \begin{align*}
                (a) + (b)
                &\circeq \EE\Big[\big|h(X_1 - \theta_n)\big|\, \bb{1}_{\{|X_1 - \theta_n| \geq \beta\} \cap \{|\theta_n - \theta| \leq \gamma\}}\Big] + \EE\Big[\big|h(X_1 - \theta_n)\big|\, \bb{1}_{\{|X_1 - \theta_n| \geq \beta\} \cap \{|\theta_n - \theta| > \gamma\}}\Big].
            \end{align*}
            By $(H.5).2$ and the DCT, we have $(a)\to 0$ as $\beta\to\infty$, uniformly in $n$.
            For the term $(b)$, condition on the value of $X_1 - \theta$, integrate by parts (see $(E.6)$ and $(H.5).1$) and then use $(E.5)$ and $(H.5).5$. We obtain
            \begin{align*}\label{eq:lem:uniform.integrability.1}
            (b)
            &= \int_{\{(u,x) \, : \, |u| \geq \beta, \, |x - u| > \gamma\}} |h(u)|\, \PP((X_1 - \theta_n,X_1 - \theta)\in d(u,x)) \notag \\[1mm]
            &= \int_{-\infty}^{\infty} \left(\int_{\{u \, : \, |u| \geq \beta, \, |x - u| > \gamma\}} |h(u)|\,
                \PP(x - (\theta_n - \theta) \in du \nvert X_1 - \theta = x)\right) \PP(X_1 - \theta\in dx) \notag \\[1mm]
            &= \int_{-\infty}^{-(\beta+\gamma)} \left\{\hspace{-1mm}
                \begin{array}{l}
                    \left.\Big[-|h(u)|\, \PP(\theta_n - \theta \leq x - u \nvert X_1 - \theta = x)\Big]\right|_{u=x+\gamma}^{-\beta} \\[3mm]
                    + \int_{x+\gamma}^{-\beta} \text{sign}(h(u))\, h'(u)\, \PP(\theta_n - \theta \leq x - u \nvert X_1 - \theta = x) \, du
                \end{array}
                \hspace{-1mm}\right\} \PP(X_1 - \theta\in dx) \notag \\[3mm]
            &+ \int_{-\infty}^{\infty} \, \lim_{t\to\infty} \left\{\hspace{-1mm}
                \begin{array}{l}
                    \left.\Big[-|h(u)|\, \PP(\theta_n - \theta \leq x - u \nvert X_1 - \theta = x)\Big]\right|_{u=(x+\gamma) \vee \beta}^t \\[3mm]
                    + \int_{(x+\gamma) \vee \beta}^t \text{sign}(h(u))\, h'(u)\, \PP(\theta_n - \theta \leq x - u \nvert X_1 - \theta = x) \, du \\[2mm]
                    + \left.\Big[|h(u)|\, \PP(\theta_n - \theta \geq x - u \nvert X_1 - \theta = x)\Big]\right|_{u=-t}^{(x-\gamma) \wedge (-\beta)} \\[3mm]
                    - \int_{-t}^{(x-\gamma) \wedge (-\beta)} \text{sign}(h(u))\, h'(u)\, \PP(\theta_n - \theta \geq x - u \nvert X_1 - \theta = x) \, du
                \end{array}
            \hspace{-1mm}\right\} \PP(X_1 - \theta\in dx) \notag \\[3mm]
            &+ \int_{\beta+\gamma}^{\infty} \left\{\hspace{-1mm}
                \begin{array}{l}
                    \left.\Big[|h(u)|\, \PP(\theta_n - \theta \geq x - u \nvert X_1 - \theta = x)\Big]\right|_{u=\beta}^{x - \gamma} \\[3mm]
                    - \int_{\beta}^{x - \gamma} \text{sign}(h(u))\, h'(u)\, \PP(\theta_n - \theta \geq x - u \nvert X_1 - \theta = x) \, du
                \end{array}
                \hspace{-1mm}\right\} \PP(X_1 - \theta\in dx) \notag \\[20mm]
            &\leq \int_{-\infty}^{-(\beta+\gamma)} \left\{\hspace{-1mm}
                \begin{array}{l}
                    |h(x+\gamma)| + 0
                \end{array}
                \hspace{-1mm}\right\} \PP(X_1 - \theta\in dx) \notag \\[1mm]
            &+ C \int_{-\infty}^{\infty} \left\{\hspace{-1mm}
                \begin{array}{l}
                    |h((x+\gamma) \vee \beta)|\, e^{-|x-((x+\gamma) \vee \beta)|^{p}} + \int_{\beta}^{\infty} |h'(u)|\, e^{-|x-u|^{p}} du \\[1mm]
                    |h((x-\gamma) \wedge (-\beta))|\, e^{-|x - ((x-\gamma) \wedge (-\beta))|^{p}} + \int_{-\infty}^{-\beta} |h'(u)|\, e^{-|x-u|^{p}} du
                \end{array}
                \hspace{-1mm}\right\} \PP(X_1 - \theta\in dx) \notag \\[1mm]
            &+ \int_{\beta+\gamma}^{\infty} \left\{\hspace{-1mm}
                \begin{array}{l}
                    |h(x-\gamma)| + 0
                \end{array}
                \hspace{-1mm}\right\} \PP(X_1 - \theta\in dx) \notag \\[1mm]
            &\lesssim \EE\Big[|h(X_1 - \theta + \gamma)| \bb{1}_{\{|X_1 - \theta + \gamma| \geq \beta\}}\Big] \,
                +\, \EE\Big[|h(\beta)|\, e^{-|X_1 - \theta - \beta|^{p}}\Big] ~+\, \int_{\beta}^{\infty} \EE\Big[|h'(u)|\, e^{-|X_1 - \theta - u|^{p}}\Big] du \notag \\
            &+ \EE\Big[|h(X_1 - \theta - \gamma)| \bb{1}_{\{|X_1 - \theta - \gamma| \geq \beta\}}\Big] \,
                +\, \EE\Big[|h(-\beta)|\, e^{-|X_1 - \theta + \beta|^{p}}\Big] ~+\, \int_{-\infty}^{-\beta} \EE\Big[|h'(u)|\, e^{-|X_1 - \theta - u|^{p}}\Big] du,
            \end{align*}
            where $y \lesssim z$ means $y \leq (1 \vee C) z$.
            As $\beta\rightarrow \infty$,
                    the first and fourth terms go to $0$ by $(H.5).2$ and the DCT,
                    the second and fifth terms go to $0$ by $(H.5).3$ and the DCT,
                    the third and sixth terms go to $0$ by $(H.5).4$ and the DCT.
            None of the terms depended on $n$, so the convergence is uniform in $n \geq N_0$.
        \end{proof}

        If $\{\theta_n^{\star}\}_{n\in \N}$ is a sequence of $M$-estimators, then the next lemma proposes an easy-to-verify condition on the tail probabilities of $\theta_n^{\star}$ for $(E.5)$ in Lemma \ref{lem:uniform.integrability} to hold uniformly in the set of estimators
        \begin{equation}\label{eq:T.class.estimators}
            \mathcal{E}_{n,\theta} \circeq \{\theta + v (\theta_n^{\star} - \theta)\}_{v\in [0,1]}, \quad \text{for some } \theta\in \R.
        \end{equation}

        \begin{lemma}\label{lem:m.estimator.exponential.bound}
            Let $\theta\in \R$ and let $X_1,X_2,X_3,\ldots$ be a sequence of i.i.d.\hspace{-1mm} random variables.
            Let $\{\theta_n^{\star}\}_{n\in \N}$ be a sequence of estimators satisfying
            \begin{equation}\label{eq:definition.M.estimator.1.to.n}
                \sum_{i=1}^n \psi(X_i - \theta_n^{\star}) = 0,
            \end{equation}
            where $\psi : \R \to \R$ is measurable, non-decreasing and $\psi(0) = 0$. Assume that there exist $N\geq 1$ and $C, \gamma, p \hspace{-0.3mm}>\hspace{-0.3mm} 0$ such that
            \begin{equation}\label{lem:m.estimator.exponential.bound.condition}
                \sup_{n\geq N} \PP\big(|\theta_n^{\star} - \theta| \geq |t|\big) \leq C e^{-|t|^{p}}, \quad \text{for all } |t| \geq \gamma.
            \end{equation}
            Then, condition $(E.5)$ from Lemma \ref{lem:uniform.integrability} is satisfied uniformly on $\mathcal{E}_{n,\theta}$, namely :
            \begin{description}
                \item[(E.5.\text{unif})] There exist $N_2\geq 2$, $C, \gamma, p > 0$ and $\beta_0 > \gamma$ such that, for $\PP(X_1 - \theta\in \, \cdot\, )$-almost-all $x\in \R$, we have
                    \begin{itemize}
                        \item For all $u \geq (x + \gamma) \vee \beta_0$ and for all $n\geq N_2$,
                            \begin{align*}
                                \sup_{\theta_n\in \mathcal{E}_{n,\theta}} \PP(\theta_n - \theta \leq x - u \nvert X_1 - \theta = x) \leq C e^{-|x-u|^{p}}.
                            \end{align*}
                        \item For all $u \leq (x - \gamma) \wedge (-\beta_0)$ and for all $n\geq N_2$,
                            \begin{align*}
                                \sup_{\theta_n\in \mathcal{E}_{n,\theta}} \PP(\theta_n - \theta \geq x - u \nvert X_1 - \theta = x) \leq C e^{-|x-u|^{p}}.
                            \end{align*}
                    \end{itemize}
            \end{description}
        \end{lemma}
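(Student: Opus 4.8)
The plan is in two moves: (i) observe that, on the relevant ranges of $u$, the supremum over $\mathcal{E}_{n,\theta}$ is attained at $v=1$, so that it suffices to bound the two conditional tail probabilities of $\theta_n^{\star}$ itself; and (ii) convert each such conditional probability, given $X_1-\theta=x$, into an \emph{unconditional} tail probability of the M-estimator built from the remaining observations $X_2,\dots,X_n$, which is exactly the object controlled by \eqref{lem:m.estimator.exponential.bound.condition}.

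For move (i), fix $x$ and take $u\geq(x+\gamma)\vee\beta_0$, so $x-u\leq-\gamma<0$. If $\theta_n-\theta=v(\theta_n^{\star}-\theta)$ with $v\in[0,1]$, then $\{\theta_n-\theta\leq x-u\}$ is impossible for $v=0$, and for $v\in(0,1]$ it forces $\theta_n^{\star}-\theta\leq v^{-1}(x-u)\leq x-u$ (because $v^{-1}\geq1$ and $x-u<0$); hence $\{\theta_n-\theta\leq x-u\}\subseteq\{\theta_n^{\star}-\theta\leq x-u\}$ for every $v$, and the supremum over $\mathcal{E}_{n,\theta}$ of $\PP(\theta_n-\theta\leq x-u\nvert X_1-\theta=x)$ is $\leq\PP(\theta_n^{\star}-\theta\leq x-u\nvert X_1-\theta=x)$. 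The range $u\leq(x-\gamma)\wedge(-\beta_0)$, where $x-u\geq\gamma>0$, is handled identically with all inequalities reversed.

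For move (ii), write $t=x-u$ and work on $\{X_1-\theta=x\}$. Since $\psi$ is non-decreasing and $\sum_{i=1}^n\psi(X_i-\theta_n^{\star})=0$, the event $\{\theta_n^{\star}-\theta\leq t\}$ forces $\psi(X_i-\theta-t)\leq\psi(X_i-\theta_n^{\star})$ termwise, hence $\sum_{i=1}^n\psi(X_i-\theta-t)\leq0$; on $\{X_1-\theta=x\}$ the first summand equals $\psi(x-t)=\psi(u)\geq0$ (because $u>0$), so $\sum_{i=2}^n\psi(X_i-\theta-t)\leq-\psi(u)\leq0$. Letting $\theta_{n-1}^{\star}$ be the $(n-1)$-sample M-estimator determined by $\sum_{i=2}^n\psi(X_i-\theta_{n-1}^{\star})=0$ (with the usual zero-crossing selection), the monotonicity of $c\mapsto\sum_{i=2}^n\psi(X_i-c)$ upgrades this to $\theta_{n-1}^{\star}-\theta\leq t$. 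As $\theta_{n-1}^{\star}$ depends only on $(X_2,\dots,X_n)$, which is independent of $X_1$ and has the same law as $(X_1,\dots,X_{n-1})$, we obtain $\PP(\theta_n^{\star}-\theta\leq t\nvert X_1-\theta=x)\leq\PP(\theta_{n-1}^{\star}-\theta\leq t)$, a genuine $(n-1)$-sample tail probability. In the lower-tail range one has $\psi(u)\leq0$ (since $u\leq-\beta_0<0$) and the symmetric computation gives $\PP(\theta_n^{\star}-\theta\geq t\nvert X_1-\theta=x)\leq\PP(\theta_{n-1}^{\star}-\theta\geq t)$.

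To finish, note that $|t|=|x-u|\geq\gamma$ throughout, so applying \eqref{lem:m.estimator.exponential.bound.condition} with sample size $n-1\geq N$ yields $\PP(\theta_{n-1}^{\star}-\theta\leq t)\leq\PP(|\theta_{n-1}^{\star}-\theta|\geq|t|)\leq Ce^{-|x-u|^{p}}$, and likewise for the lower tail. Chaining moves (i)--(ii) then gives $(E.5.\text{unif})$ with $N_2=N+1\geq2$, the same constants $C,\gamma,p$, and $\beta_0$ an arbitrary number exceeding $\gamma$. I expect the delicate point to be move (ii): one has to be careful about exactly which inequalities are preserved when passing from the conditional law of $\theta_n^{\star}$ to the unconditional law of $\theta_{n-1}^{\star}$, and about the compatibility of the zero-crossing convention for the (possibly non-unique) M-estimators in both tail directions; the remainder is routine monotonicity bookkeeping.
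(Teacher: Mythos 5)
Your proposal is correct and follows essentially the same route as the paper: reduce the supremum over $\mathcal{E}_{n,\theta}$ to $\theta_n^{\star}$ (the $v=1$ case), then use the monotonicity of $\psi$ and the defining equation \eqref{eq:definition.M.estimator.1.to.n} to dominate the conditional event by the corresponding event for the leave-one-out estimator built from $X_2,\ldots,X_n$, and finish with independence, the equality in law with $\theta_{n-1}^{\star}$, and \eqref{lem:m.estimator.exponential.bound.condition} at sample size $n-1$, taking $N_2=N+1$. The zero-crossing/non-uniqueness subtlety you flag in move (ii) is present in exactly the same form in the paper's implication $\sum_{i=2}^n \psi(X_i-\theta_n^{\star})\leq 0 \Rightarrow \theta_{2:n}^{\star}\leq\theta_n^{\star}$, so it is not a gap relative to the paper's own argument.
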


        \begin{proof}
            For all $n \geq 2$, let $\theta^{\star}_{2:n} \circeq \theta^{\star}_{2:n}(X_2,X_3,\ldots,X_n)$ be an estimator that satisfies
            \begin{equation}\label{eq:definition.M.estimator.2.to.n}
                \sum_{i=2}^n \psi(X_i - \theta^{\star}_{2:n}) = 0 \quad \text{and} \quad \theta^{\star}_{2:n} \stackrel{\text{law}}{=} \theta^{\star}_{n-1}.
            \end{equation}
            Since $\psi$ is non-decreasing and $\psi(0) = 0$,
            \begin{align}
                \bullet \quad \theta_n^{\star} \leq X_1
                &~~\Longrightarrow~~ \psi(X_1 - \theta_n^{\star}) \geq 0
                ~~\stackrel{\eqref{eq:definition.M.estimator.1.to.n}}{\Longrightarrow}~~ \sum_{i=2}^n \psi(X_i - \theta_n^{\star}) \leq 0 ~~\stackrel{\eqref{eq:definition.M.estimator.2.to.n}}{\Longrightarrow}~~ \theta^{\star}_{2:n} \leq \theta_n^{\star} \leq X_1, \label{eq:lem:m.estimator.exponential.bound.end.eq.1.before} \\
                \bullet \quad \theta_n^{\star} \geq X_1
                &~~\Longrightarrow~~ \psi(X_1 - \theta_n^{\star}) \leq 0
                ~~\stackrel{\eqref{eq:definition.M.estimator.1.to.n}}{\Longrightarrow}~~ \sum_{i=2}^n \psi(X_i - \theta_n^{\star}) \geq 0 ~~\stackrel{\eqref{eq:definition.M.estimator.2.to.n}}{\Longrightarrow}~~ \theta^{\star}_{2:n} \geq \theta_n^{\star} \geq X_1. \label{eq:lem:m.estimator.exponential.bound.end.eq.2.before}
            \end{align}
            Let $\theta_n\in \mathcal{E}_{n,\theta}$ for all $n\in \N$.
            In order to prove \eqref{eq:lem:m.estimator.exponential.bound.end.eq.1} (respectively \eqref{eq:lem:m.estimator.exponential.bound.end.eq.2}) below, we use the following facts in succession : $\theta_n - \theta \leq 0 ~\Longrightarrow~ \theta_n^{\star} - \theta \leq \theta_n - \theta$ (respectively $\theta_n - \theta \geq 0 ~\Longrightarrow~ \theta_n^{\star} - \theta \geq \theta_n - \theta$), \eqref{eq:lem:m.estimator.exponential.bound.end.eq.1.before} (respectively \eqref{eq:lem:m.estimator.exponential.bound.end.eq.2.before}), the independence between $X_1$ and $\theta^{\star}_{2:n}$, \eqref{eq:definition.M.estimator.2.to.n}, and \eqref{lem:m.estimator.exponential.bound.condition}.

            \begin{itemize}
                \item For all $u \geq (x + \gamma) \vee \beta_0 > 0$ (note that $x - u \leq - \gamma < 0$) and for all $n\geq N + 1$, we have
                    \begin{align}\label{eq:lem:m.estimator.exponential.bound.end.eq.1}
                        \PP(\theta_n - \theta \leq x - u \nvert X_1 - \theta = x)
                        &\leq \PP(\theta_n^{\star} - \theta \leq x - u \nvert X_1 - \theta = x) \notag \\[1mm]
                        &\leq \PP(\theta_{2:n}^{\star} - \theta \leq x - u) \notag \\[1mm]
                        &= \PP(\theta_{n-1}^{\star} - \theta \leq x - u) \notag \\
                        &\leq C e^{-|x-u|^{p}} \hspace{-1mm}.
                    \end{align}
                \item For all $u \leq (x - \gamma) \wedge (-\beta_0) < 0$ (note that $x - u \geq \gamma > 0$) and for all $n\geq N + 1$, we have
                    \begin{align}\label{eq:lem:m.estimator.exponential.bound.end.eq.2}
                        \PP(\theta_n - \theta \geq x - u \nvert X_1 - \theta = x)
                        &\leq \PP(\theta_n^{\star} - \theta \geq x - u \nvert X_1 - \theta = x) \notag \\[1mm]
                        &\leq \PP(\theta_{2:n}^{\star} - \theta \geq x - u) \notag \\[1mm]
                        &= \PP(\theta_{n-1}^{\star} - \theta \geq x - u) \notag \\
                        &\leq C e^{-|x-u|^{p}} \hspace{-1mm}.
                    \end{align}
            \end{itemize}
            Simply choose $N_2 \circeq N + 1$ in $(E.5.\text{unif})$. This ends the proof.
        \end{proof}

        We can now state the main result. The structure of the assumptions is illustrated in Figure \ref{fig:proof.structure}.

        \begin{theorem}\label{thm:probleme.Pierre.uniform}
            Let $\theta\in \R$ and let $X_1,X_2,X_3,\ldots$ be a sequence of i.i.d.\hspace{-1mm} random variables satisfying
            \begin{description}
                \item[(X.1)] $\PP(X_1 = \theta) = 0$.
            \end{description}
            Let $\{\theta_n^{\star}\}_{n\in \N}$ be a sequence of estimators satisfying {\bf (E.5.unif)} directly or the conditions in Lemma \ref{lem:m.estimator.exponential.bound}.
            Denote $\mathcal{E}_{n,\theta} \circeq \{\theta + v (\theta_n^{\star} - \theta)\}_{v\in [0,1]}$, and assume that
            \begin{description}
                \item[(E.1.\text{unif})] $\theta_n^{\star} \stackrel{\PP}{\longrightarrow} \theta$;
                \item[(E.2.\text{unif})]\vspace{-0.8mm}
                    For all $n\in \N$, all $i\in \{1,2,\ldots,n\}$ and all $\theta_n\in \mathcal{E}_{n,\theta}$, $(X_i - \theta_n,X_i - \theta) \stackrel{\text{law}}{=} (X_1 - \theta_n,X_1 - \theta)$;
                \item[(E.4.\text{unif})] If $\limsup_{x\to 0} |h(x)| < \infty$, we impose no condition.
                    Otherwise, assume that there exist $N_1\in \N$, $\alpha_0 > 0$ and a constant $C_{\alpha_0} > 0$ such that
                    \begin{equation*}
                        \sup_{n\geq N_1} \sup_{\theta_n\in \mathcal{E}_{n,\theta}} \sup_{A\in \mathcal{B}_{>0}([-\alpha_0,\alpha_0])} \frac{\PP(X_1 - \theta_n\in A)}{\text{Lebesgue}(A)} \leq C_{\alpha_0} < \infty.
                    \end{equation*}
                \item[(E.6.\text{unif})] There exists $N_3\in \N$ such that for all $n \geq N_3$ and for all $\theta_n\in \mathcal{E}_{n,\theta}$, there exists $A_{n,\theta_n}\in \mathcal{B}(\R)$ such that $\PP(X_1 - \theta\in A_{n,\theta_n}) = 1$ and, for all $x\in A_{n,\theta_n}$, the measure $\PP(x - (\theta_n - \theta) \in \, \cdot\, \nvert X_1 - \theta = x)$, when restricted to $\{u\in \R : |u| \geq \beta_0, |x - u| > \gamma\}$, is absolutely continuous with respect to the Lebesgue measure.
            \end{description}
            Finally, assume
            \begin{description}
                \item[(H.1), (H.2)] from Proposition \ref{prop:probleme.Pierre},
                \item[(H.3), (H.4), (H.5)] from Lemma \ref{lem:uniform.integrability}.
            \end{description}
            Then, the conclusion in Proposition \ref{prop:probleme.Pierre} holds uniformly for $\theta_n\in \mathcal{E}_{n,\theta}$, namely
            \begin{equation}\label{eq:thm:probleme.Pierre.uniform.conclusion}
                \lim_{n\to\infty} \sup_{\theta_n\in \mathcal{E}_{n,\theta}} \EE\left|\frac{1}{n} \sum_{i=1}^n \bb{1}_{\{X_i \neq \theta_n\}} h(X_i - \theta_n) - \EE\big[h(X_1 - \theta)\big]\right| = 0.
            \end{equation}
        \end{theorem}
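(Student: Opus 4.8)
The plan is to reduce the uniform conclusion \eqref{eq:thm:probleme.Pierre.uniform.conclusion} to the non-uniform Proposition \ref{prop:probleme.Pierre}, applied to an arbitrary deterministic selection from the family $\mathcal{E}_{n,\theta}$, and then to promote the resulting pointwise-in-$v$ convergence to uniformity over $v\in[0,1]$ by a subsequence argument. Concretely, I would fix an arbitrary sequence of constants $(v_n)_{n\in\N}$ with $v_n\in[0,1]$ and set $\theta_n\circeq\theta+v_n(\theta_n^{\star}-\theta)$; since each $v_n$ is a constant, $\theta_n$ is a legitimate estimator, and $\theta_n\in\mathcal{E}_{n,\theta}$ for every $n$. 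The bulk of the argument is then to check that this single sequence $\{\theta_n\}_{n\in\N}$ satisfies every hypothesis of Proposition \ref{prop:probleme.Pierre}.

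That verification is mostly bookkeeping. Conditions $(X.1)$, $(H.1)$, $(H.2)$ are imposed verbatim in the theorem. For $(E.1)$ one has $|\theta_n-\theta|=v_n|\theta_n^{\star}-\theta|\le|\theta_n^{\star}-\theta|\stackrel{\PP}{\longrightarrow}0$ by $(E.1.\text{unif})$. For $(E.2)$, since $\theta_n\in\mathcal{E}_{n,\theta}$, the identity $(X_i-\theta_n,X_i-\theta)\stackrel{\text{law}}{=}(X_1-\theta_n,X_1-\theta)$ is the corresponding special case of $(E.2.\text{unif})$. The one hypothesis requiring an auxiliary input is the uniform integrability condition $(E.3)$, which I would obtain by applying Lemma \ref{lem:uniform.integrability} to the same sequence $\{\theta_n\}$: there, $(H.3)$, $(H.4)$, $(H.5)$ are assumed in the theorem, while $(E.4)$, $(E.5)$, $(E.6)$ follow at once from $(E.4.\text{unif})$, from $(E.5.\text{unif})$ — which holds either by assumption or, when $\theta_n^{\star}$ is an $M$-estimator, via Lemma \ref{lem:m.estimator.exponential.bound} — and from $(E.6.\text{unif})$, respectively, since each of these uniform conditions bounds the relevant quantity by a supremum over $\mathcal{E}_{n,\theta}$ in which our $\theta_n$ participates, and since the indices $N_1,N_2,N_3$ and the constants $\alpha_0,\beta_0,\gamma,p,C$ therein do not depend on the choice of $(v_n)$. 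Hence Lemma \ref{lem:uniform.integrability} gives $(E.3)$ with $N_0\circeq N_1\vee N_2\vee N_3$, and Proposition \ref{prop:probleme.Pierre} applies, yielding
\[
R_n(v_n)\circeq\EE\left|\frac{1}{n}\sum_{i=1}^n\bb{1}_{\{X_i\neq\theta_n\}}h(X_i-\theta_n)-\EE\big[h(X_1-\theta)\big]\right|\longrightarrow 0
\]
for this (arbitrary) choice of $(v_n)$.

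It then remains only to upgrade this to $\sup_{v\in[0,1]}R_n(v)\to0$, where $R_n(v)$ denotes the expectation above with $\theta_n$ replaced by $\theta+v(\theta_n^{\star}-\theta)$. This is the only step calling for a (modest) idea, since the maximizing value of $v$ may drift with $n$; I would argue by contradiction. If $\sup_{v\in[0,1]}R_n(v)$ did not converge to $0$, there would be $\varepsilon>0$ and a subsequence $(n_k)$ with $\sup_{v\in[0,1]}R_{n_k}(v)>\varepsilon$ for every $k$; choosing for each $k$ some $v_{n_k}\in[0,1]$ with $R_{n_k}(v_{n_k})>\varepsilon$ and setting $v_n\circeq 0$ for $n\notin\{n_k : k\in\N\}$ would produce a deterministic $[0,1]$-valued sequence $(v_n)$ along which $R_n(v_n)$ does not converge to $0$, contradicting the previous paragraph. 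Therefore $\sup_{v\in[0,1]}R_n(v)\to0$, which is precisely \eqref{eq:thm:probleme.Pierre.uniform.conclusion}. I do not anticipate any genuine obstacle here: all the substantive analysis — the tail estimate obtained by conditioning and integration by parts, and the control of the blow-up of $|h|$ near $0$ — has already been carried out in Lemma \ref{lem:uniform.integrability} and Proposition \ref{prop:probleme.Pierre}, so the theorem is essentially the assembly of those pieces together with the routine subsequence device for passing from the pointwise to the uniform statement.
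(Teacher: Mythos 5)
Your proposal is correct, but it reaches the uniform statement by a different mechanism than the paper. You reduce everything to the non-uniform results: for an arbitrary deterministic sequence $(v_n)\subset[0,1]$ you observe that $\theta_n\circeq\theta+v_n(\theta_n^{\star}-\theta)$ is itself an admissible estimator sequence, that $(X.1)$, $(H.1)$--$(H.5)$, $(E.1)$, $(E.2)$ hold for it, and that $(E.4)$--$(E.6)$ follow from the corresponding unif-versions (with $(E.5.\text{unif})$ supplied either directly or by Lemma \ref{lem:m.estimator.exponential.bound}), so Lemma \ref{lem:uniform.integrability} yields $(E.3)$ and Proposition \ref{prop:probleme.Pierre} gives $R_n(v_n)\to 0$; you then upgrade to $\sup_{v\in[0,1]}R_n(v)\to 0$ by the standard selection-of-near-maximizers/contradiction argument. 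This is legitimate precisely because the supremum in \eqref{eq:thm:probleme.Pierre.uniform.conclusion} is over the deterministic index $v\in[0,1]$ and each $R_n(v)$ is a deterministic number, so ``convergence along every deterministic selection'' is equivalent to convergence of the supremum; no measurability or uniform-rate issue arises. The paper instead reruns the proof of Proposition \ref{prop:probleme.Pierre} with a supremum over $\mathcal{E}_{n,\theta}$ carried through each step: it first establishes the uniform-in-family uniform integrability $(E.3.\text{unif})$ (by redoing Lemma \ref{lem:uniform.integrability} with the unif hypotheses, whose bounds do not depend on the member of $\mathcal{E}_{n,\theta}$), then a uniform convergence in probability statement, and combines them to make \eqref{eq:start.1.L1}, \eqref{eq:start.1.L1.2.L1.combined} and \eqref{eq:lem:probleme.Pierre.final.bound} hold with $\sup_{\theta_n\in\mathcal{E}_{n,\theta}}$ in front. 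Your route is more modular and avoids restating uniform versions of the intermediate steps, at essentially no cost here; the paper's route isolates the reusable uniform condition $(E.3.\text{unif})$ (highlighted in its closing remark as the essential hypothesis together with $(X.1)$, $(H.1)$, $(H.2)$, $(E.1.\text{unif})$, $(E.2.\text{unif})$) and would also survive settings where one wants the uniform statements themselves, not just the sup of expectations. Both arguments rest on the same substantive inputs (Lemma \ref{lem:uniform.integrability} and Lemma \ref{lem:m.estimator.exponential.bound}), so I see no gap in your proof.
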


        \begin{proof}
            We know that $(E.5.\text{unif})$ holds, either directly or via the conditions in Lemma \ref{lem:m.estimator.exponential.bound}.
            By combining $(E.4.\text{unif})$ to $(E.6.\text{unif})$ and $(H.3)$ to $(H.5)$, a proof along the lines of Lemma \ref{lem:uniform.integrability} shows
            \begin{description}
                \item[(E.3.\text{unif})]
                \begin{equation*}\label{eq:uniform.integrability.uniform.in.S}
                    \lim_{K\rightarrow\infty} \, \sup_{n\geq N_0} \sup_{\theta_n \in \mathcal{E}_{n,\theta}} \EE\bigg[\big|h(X_1 - \theta_n)\big|\, \bb{1}_{\{X_1 \neq \theta_n\} \cap \{|h(X_1 - \theta_n)| \geq K\}}\bigg] = 0.
                \end{equation*}
            \end{description}
            By $(E.3.\text{unif})$, $(H.2)$ and the identity $|U_n + V_n| \bb{1}_{\{|U_n + V_n| \geq 2K\}} \leq 2 |U_n| \bb{1}_{\{|U_n| \geq K\}} + 2 |V_n| \bb{1}_{\{|V_n| \geq K\}}$, we deduce
            \begin{equation}\label{eq:uniform.integrability.uniform.in.S.consequence}
                \lim_{K\rightarrow\infty} \, \sup_{n\geq N_0} \sup_{\theta_n \in \mathcal{E}_{n,\theta}}
                \EE\bigg[\hspace{-1mm}
                    \begin{array}{l}
                        |h(X_1 - \theta_n) - h(X_1 - \theta)| \bb{1}_{\{X_1 \neq \theta_n\} \cap \{|h(X_1 - \theta_n) - h(X_1 - \theta)| \geq K\}}
                    \end{array}
                    \hspace{-1mm}\bigg] = 0.
            \end{equation}

            To conclude, we rerun the proof of Proposition \ref{prop:probleme.Pierre} with our new assumptions.
            By $(X.1)$, $(H.2)$, $(E.1.\text{unif})$ and $(E.2.\text{unif})$, the convergence in \eqref{eq:start.1.L1} is valid for $\sup_{\theta_n \in \mathcal{E}_{n,\theta}}$ of the expectation.
            This implies that the convergence in \eqref{eq:start.1.L1.2.L1.combined} is also valid for $\sup_{\theta_n \in \mathcal{E}_{n,\theta}}$ of the expectation.
            Furthermore, by $(H.1)$, $(E.1.\text{unif})$ and the continuous mapping theorem, we have, for all $\varepsilon > 0$,
            \begin{equation}\label{eq:thm:probleme.Pierre.uniform.end.uniform.convergence.probability}
                \lim_{n\to\infty} \sup_{\theta_n\in \mathcal{E}_{n,\theta}} \PP\Big(\bb{1}_{\{X_1 \neq \theta_n\}} \big|h(X_1 - \theta_n) - h(X_1 - \theta)\big| > \varepsilon\Big) = 0.
            \end{equation}
            By combining \eqref{eq:uniform.integrability.uniform.in.S.consequence} and \eqref{eq:thm:probleme.Pierre.uniform.end.uniform.convergence.probability}, the $\sup_{\theta_n \in \mathcal{E}_{n,\theta}}$ of the expectation on the right-hand side of \eqref{eq:lem:probleme.Pierre.final.bound} converges to $0$.
            In summary, we have shown that $\sup_{\theta_n \in \mathcal{E}_{n,\theta}}$ of the expectations in \eqref{eq:start.1.L1}, \eqref{eq:start.1.L1.2.L1.combined} and \eqref{eq:lem:probleme.Pierre.final.bound} all converge (respectively) to $0$. Hence, the conclusion of Proposition \ref{prop:probleme.Pierre} holds for $\sup_{\theta_n \in \mathcal{E}_{n,\theta}}$ of the expectation, which is exactly the claim made in \eqref{eq:thm:probleme.Pierre.uniform.conclusion}.
        \end{proof}

        \begin{remark}
            By following the proof of Theorem \ref{thm:probleme.Pierre.uniform}, we see that $(X.1)$, $(H.1)$, $(H.2)$, $(E.1.\text{unif})$, $(E.2.\text{unif})$ and $(E.3.\text{unif})$ alone imply the conclusion in \eqref{eq:thm:probleme.Pierre.uniform.conclusion}. The other assumptions in the statement of the theorem are simply there to give a more practical way to verify $(E.3.\text{unif})$.
        \end{remark}

        \vspace{8mm}
        \begin{figure}[ht]
            \centering
            \includegraphics[scale=1.15]{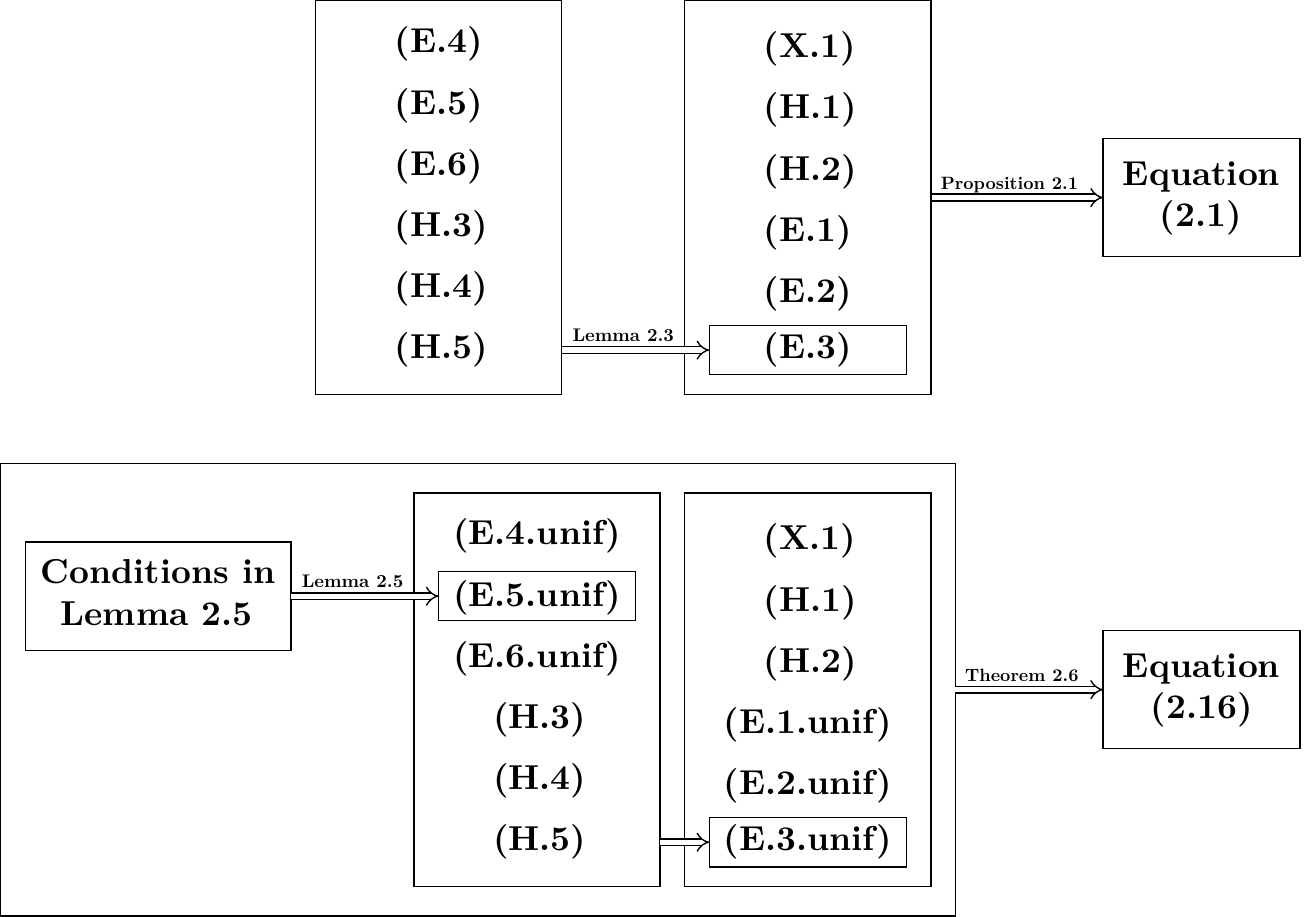}
            \caption{Logical structure of the assumptions and their implications.}
            \label{fig:proof.structure}
        \end{figure}

\section{Example}\label{sec:example}

    We now give an application of the previous theorem.
    The context of the problem is described at the end of Section \ref{sec:intro}.

        \newpage
        \begin{lemma}\label{lem:example.1}
            Let $X_1,X_2,X_3,\ldots$ be a sequence of i.i.d.\hspace{-0.5mm} random variables with density function
            \begin{equation*}
                f_{X_1}(x) \circeq \frac{1}{4 \sigma} e^{-\frac{1}{2} \left|\frac{x - \mu}{\sigma}\right|}, \quad x\in \R,
            \end{equation*}
            where $\mu\in \R$ and $\sigma > 0$.
            Define $h : \R\backslash\{0\} \to \R$ by
            \begin{equation*}
                h(y) \circeq \text{sign}(y) \log|y|.
            \end{equation*}
            Let
            \begin{equation}\label{eq.MLEs.laplace.example}
                \mu_n^{\star} \circeq \text{median}(X_1,X_2,\ldots,X_n)
                \circeq
                \left\{\hspace{-1mm}
                \begin{array}{ll}
                    X_{((n+1)/2)}, ~&\mbox{if } n ~\text{is odd}, \\[1mm]
                    \frac{1}{2} (X_{(n/2)} + X_{(n/2+1)}), ~&\mbox{if } n ~\text{is even}.
                \end{array}
                \right.
            \end{equation}
            For $v\in [0,1]$, define $\mu_{n,v}^{\star} \circeq \mu + v(\mu_n^{\star} - \mu)$, and let $\mathcal{E}_{n,\mu} \circeq \{\mu_{n,v}^{\star}\}_{v\in [0,1]}$.
            Then,
            \begin{equation}\label{eq:lem:example.1.conclusion}
                \lim_{n\to\infty} \sup_{v\in [0,1]} \EE\left|\frac{1}{n} \sum_{i=1}^n \bb{1}_{\{X_i \neq \mu_{n,v}^{\star}\}} h(X_i - \mu_{n,v}^{\star}) - \EE\left[h(X_1 - \mu)\right]\right| = 0.
            \end{equation}
        \end{lemma}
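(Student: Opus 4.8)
The plan is to derive Lemma~\ref{lem:example.1} from Theorem~\ref{thm:probleme.Pierre.uniform}, applied with $\theta = \mu$, with $\theta_n^\star = \mu_n^\star$ the sample median, and with $h(y) = \mathrm{sign}(y)\log|y|$; with these choices $\mathcal{E}_{n,\theta}$ becomes the set $\mathcal{E}_{n,\mu}$ of the statement and \eqref{eq:thm:probleme.Pierre.uniform.conclusion} reads exactly as \eqref{eq:lem:example.1.conclusion}. Observe that $\mu_n^\star$ is the $M$-estimator attached to the non-decreasing score $\psi(x) = \mathrm{sign}(x)$ (with $\psi(0) = 0$), since $\sum_{i=1}^n \mathrm{sign}(X_i - \mu_n^\star) = 0$ almost surely --- for odd $n$ the central observation contributes $0$ and the others cancel in pairs, and for even $n$ the two halves cancel. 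Consequently $(E.5.\mathrm{unif})$ will come from Lemma~\ref{lem:m.estimator.exponential.bound} rather than being checked directly, and it remains to verify $(X.1)$, $(H.1)$--$(H.5)$, $(E.1.\mathrm{unif})$, $(E.2.\mathrm{unif})$, $(E.4.\mathrm{unif})$ and $(E.6.\mathrm{unif})$, together with the hypothesis~\eqref{lem:m.estimator.exponential.bound.condition} required to invoke Lemma~\ref{lem:m.estimator.exponential.bound}.

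The distributional and analytic conditions are routine. $(X.1)$ holds because $X_1$ is absolutely continuous; $(E.1.\mathrm{unif})$ is the consistency of the sample median; and $(E.2.\mathrm{unif})$ follows from exchangeability, since $\mu_n^\star$ --- hence every $\mu_{n,v}^\star$ --- is a symmetric function of $X_1,\ldots,X_n$. On the side of $h$: it is continuous on $\R\setminus\{0\}$, which gives $(H.1)$ and $(H.3)$, and its only singularity is the integrable logarithmic one at $0$, so $(H.4)$ holds for any $\alpha_0\in(0,1]$, while $(H.2)$, i.e.\ $\EE|\log|X_1-\mu|| < \infty$, holds because $f_{X_1}$ is bounded near $\mu$ and its exponential tails dominate $\log$. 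For $(H.5)$ I would fix $p = 1$, any $\beta_0 > 1$ and any $\gamma\in(0,\beta_0)$: part~1 is the absolute continuity of $\log$ on bounded intervals away from $0$; part~2 uses $\sup_{|t|\le\gamma}|h(X_1-\mu-t)|\,\bb{1}_{\{|X_1-\mu-t|\ge\beta_0\}} \le \log^+(|X_1-\mu|+\gamma)$, which is integrable; part~3 holds because $\log|\beta|$ is negligible against $e^{-|x-\beta|}$ as $|\beta|\to\infty$, with a crude bound on $\PP(|X_1-\mu-\beta|\le r)$ furnishing the uniform integrability; part~4 reduces to $\int_{|u|\ge\beta_0}|u|^{-1}\,\EE[e^{-|X_1-\mu-u|}]\,du < \infty$, which is valid since $\EE[e^{-|X_1-\mu-u|}]$ decays exponentially in $|u|$; and part~5 holds because $h'(u) = 1/|u| > 0$ and $\mathrm{sign}(h(u)) = \mathrm{sign}(u)$ for $|u| > 1$, so the expression in part~5 equals $-h'(u) \le 0$ on $\{|u| > \beta_0\}$.

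To invoke Lemma~\ref{lem:m.estimator.exponential.bound} I check \eqref{lem:m.estimator.exponential.bound.condition} with $p = 1$, a standard tail bound for the sample median: the event $\{\mu_n^\star - \mu \ge t\}$ forces at least $\lfloor n/2\rfloor$ of the $X_i$ to exceed $\mu+t$, so $\PP(\mu_n^\star - \mu \ge t) \le 2\binom{n}{\lfloor n/2\rfloor}\big(\tfrac12 e^{-t/(2\sigma)}\big)^{\lfloor n/2\rfloor}$, which for $t$ large is at most $C e^{-t/(4\sigma)}$ uniformly in $n\ge 2$; the left tail is identical by the symmetry of $f_{X_1}$ about $\mu$. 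This yields \eqref{lem:m.estimator.exponential.bound.condition} with $p = 1$, matching the exponent fixed in $(H.5)$.

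The substantive part --- and the step I expect to be the main obstacle --- is $(E.4.\mathrm{unif})$ together with $(E.6.\mathrm{unif})$, the conditions that require genuine control of the law of $X_1 - \mu_{n,v}^\star = (1-v)(X_1-\mu) + v(X_1-\mu_n^\star)$ uniformly over $v\in[0,1]$. Conditioning on $X_2,\ldots,X_n$, the map $x \mapsto x - \mu_{n,v}^\star = (x-\mu) - v\big(\mathrm{median}(x,X_2,\ldots,X_n)-\mu\big)$ is non-decreasing and piecewise affine, with slope $1$ outside the random interval $J$ on which the median genuinely depends on $x$, and slope $1 - v\,m'(x)$ on $J$, where $m'(x) = 1$ for odd $n$ and $m'(x) = \tfrac12$ for even $n$; for odd $n$ one has $\mathrm{median}(x,X_2,\ldots,X_n) = x$ on $J$, so $x - \mu_{n,v}^\star = (1-v)(x-\mu)$ there. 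Outside $J$ the push-forward of the (bounded) density of $X_1$ has density $\le (4\sigma)^{-1}$, which yields the absolute continuity required in $(E.6.\mathrm{unif})$ away from the origin and gives $(E.4.\mathrm{unif})$ near the origin for $v$ bounded away from $1$. The delicate regime is $x\in J$ with $n$ odd and $v$ close to $1$, where $(1-v)(x-\mu)$ is squeezed into a neighbourhood of the origin; handling this contribution uniformly in $v$ is where one must exploit both the smallness $\PP(X_1\in J) = 1/n$ and the exponential tail control on $\mu_n^\star$ from Lemma~\ref{lem:m.estimator.exponential.bound}, and I expect this to be the crux of the argument. Once $(E.4.\mathrm{unif})$ and $(E.6.\mathrm{unif})$ are secured, every hypothesis of Theorem~\ref{thm:probleme.Pierre.uniform} is satisfied and \eqref{eq:lem:example.1.conclusion} follows.
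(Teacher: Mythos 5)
Your overall route is exactly the paper's: apply Theorem~\ref{thm:probleme.Pierre.uniform} with $\theta=\mu$, $\theta_n^{\star}=\mu_n^{\star}$ and $\psi=\mathrm{sign}$, obtain $(E.5.\mathrm{unif})$ from Lemma~\ref{lem:m.estimator.exponential.bound} via a binomial tail bound of the form \eqref{lem:m.estimator.exponential.bound.condition} with $p=1$, and check the remaining conditions; your verifications of $(X.1)$, $(H.1)$--$(H.5)$, $(E.1.\mathrm{unif})$, $(E.2.\mathrm{unif})$ and of the median tail bound agree with the paper's. The genuine gap is that $(E.4.\mathrm{unif})$ (and the absolute-continuity part of $(E.6.\mathrm{unif})$) is never actually proved: you describe the piecewise-affine structure of $x\mapsto x-\mu_{n,v}^{\star}$, isolate the regime ``$n$ odd, $X_1$ in the pivotal interval, $v$ near $1$'', and then defer it as ``the crux''. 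Since $(E.4.\mathrm{unif})$ is precisely the hypothesis that lets the theorem handle an $h$ blowing up at $0$, the proposal stops short of a proof. For comparison, the paper's argument at this step is to write $f_{X_1-v\mu_n^{\star}}(u)=\int \tfrac{1}{v}\, f_{\mu_n^{\star}\mid X_1}\big(\tfrac{x-u}{v}\nvert x\big) f_{X_1}(x)\, dx$, bound the conditional density of the median given $X_1=x$ by a constant multiple of the density of the median of $n-2$ i.i.d.\ copies (the binomial coefficients differ by a bounded factor, by Stirling), and integrate out $x$, yielding $f_{X_1-v\mu_n^{\star}}(u)\le C\|f_{X_1}\|_{\infty}$ uniformly in $u$, $n\ge 3$ and $v\in(0,1]$.

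That said, your instinct that this regime is the real obstacle is well founded, and you should not expect to ``secure'' it by combining $\PP(X_1=\mu_n^{\star})=1/n$ with the tail bound: the paper's computation accounts only for the absolutely continuous part of the conditional law of $\mu_n^{\star}$ given $X_1=x$, and ignores its atom at $x$ (for odd $n$ the event $\{X_1=\mu_n^{\star}\}$ has positive conditional probability $\binom{n-1}{(n-1)/2}F(x)^{(n-1)/2}(1-F(x))^{(n-1)/2}$). On that event $X_1-\mu_{n,v}^{\star}=(1-v)(X_1-\mu)$, whose contribution to the density of $X_1-\mu_{n,v}^{\star}$ at $u=0$ is of order $\big(\sqrt{n}\,(1-v)\big)^{-1}$, so no bound uniform in $v\in(0,1]$ of the type required by $(E.4.\mathrm{unif})$ can hold for fixed odd $n$. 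Worse, the single median term $\tfrac{1}{n}h\big((1-v)(\mu_n^{\star}-\mu)\big)=\tfrac{1}{n}\,\mathrm{sign}(\mu_n^{\star}-\mu)\big(\log(1-v)+\log|\mu_n^{\star}-\mu|\big)$ diverges almost surely as $v\uparrow 1$ while the other summands stay finite, so by Fatou the supremum over $v\in[0,1]$ of the expectation in \eqref{eq:lem:example.1.conclusion} is $+\infty$ for every fixed odd $n\ge 3$ (the indicator removes this term only at $v=1$ exactly). So completing your plan requires either treating the event $\{X_1=\mu_n^{\star}\}$ separately and weakening the conclusion --- e.g.\ restricting to even $n$, taking the supremum over $v$ outside that exceptional event, or replacing $\sup_{v\in[0,1]}$ by $\int_0^1\cdot\, dv$, which is all the Taylor-expansion application needs since $\int_0^1|\log(1-v)|\,dv<\infty$ --- or reformulating $(E.4.\mathrm{unif})$ to allow a small-probability singular component. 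In short, your write-up is honest about where the difficulty lies, but it leaves unproved exactly the condition the example is meant to illustrate, and the difficulty you flagged is substantive rather than merely technical.
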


        \begin{proof}
            Without loss of generality, assume that $\mu = 0$.
            Below, we verify the conditions of Theorem \ref{thm:probleme.Pierre.uniform}.
            \begin{description}
                \item[(X.1)] $\PP(X_1 = 0) = 0$. This is obvious.
                \item[\bf(Conditions in Lemma \ref{lem:m.estimator.exponential.bound})]
                    We show that the conditions are satisfied with $\psi(y) \circeq \text{sign}(y)$ and $\psi(0) \circeq 0$.
                    Indeed, by \eqref{eq.MLEs.laplace.example}, we know that $\sum_{i=1}^n \psi(X_i - \mu_n^{\star}) = 0$.
                    Furthermore, for $N\in \N$ and $\gamma > 0$ both large enough (depending on $\sigma$), we have, for all $n \geq N$ and all $t \geq \gamma$,
                    \begin{align}\label{eq:lem:probleme.Pierre.uniform.first.condition}
                        \PP(\mu_n^{\star} \geq t)
                        &\leq \sum_{k=\lceil n/2 \rceil}^n \binom{n}{k} \, \PP(X_1 \geq t)^k \, \PP(X_1 \leq t)^{n - k} \notag \\
                        &\leq (n - \lceil n/2 \rceil) \cdot \binom{n}{\lceil n/2 \rceil} \cdot \PP(X_1 \geq t)^{\lceil n/2 \rceil} \notag \\
                        &\leq \lfloor n/2 \rfloor \cdot 2 \frac{2^n}{\sqrt{n}} \cdot \Big(\frac{1}{2} e^{-\frac{t}{2\sigma}}\Big)^{\lceil n/2 \rceil}
                        \leq \frac{\sqrt{n}}{2} 2^n e^{-\frac{nt}{8\sigma}} \cdot e^{-\frac{nt}{8\sigma}}
                        \leq \frac{1}{2} e^{-t}.
                    \end{align}
                    To obtain the third inequality, we use Stirling's formula and assume that $N$ is large enough.
                    To obtain the last inequality, assume that $N \geq 8\sigma$ and $\gamma \geq 8 \sigma$.
                    This proves \eqref{lem:m.estimator.exponential.bound.condition} with $C = 1$ and $p = 1$.
                \item[(E.1.\text{unif})] $\mu_n^{\star} \stackrel{\PP}{\longrightarrow} 0$. This is explained in Example 5.11 of \cite{MR1652247}.
                \item[(E.2.\text{unif})] For any $v\in [0,1]$, the estimator $\mu_{n,v}^{\star} = v \mu_n^{\star}$ is symmetric with respect to its $n$ variables because the median, $\mu_n^{\star}$, is symmetric with respect to its $n$ variables. Since the $X_i$'s are i.i.d., the condition is satisfied.
                \item[(E.4.\text{unif})] We have $\limsup_{x\to 0} |h(x)| = \infty$, so we need to verify the condition.
                    For any $n\geq 2$ and any $v\in [0,1]$, note that $X_1 - v \mu_n^{\star}$ has a density function.
                    It suffices to show that the densities are bounded, uniformly in $n$ and $v$, by a positive constant.
                    Since the density $u \mapsto f_{X_1 - v \mu_n^{\star}}(u)$ is symmetric around $0$, we will assume, without loss of generality, that $u >0$.
                    For $v\in (0,1]$, denote $z \circeq (x - u) / v$ and notice that $z < x$.

                    When $v\in (0,1]$ and $n\geq 3$ is odd, we have
                    \begin{align*}
                        f_{X_1 - v \mu_n^{\star}}(u)
                        &= \int_{-\infty}^{\infty} f_{X_1 - v\mu_n^{\star} | X_1}(u \nvert x) f_{X_1}(x) dx
                        = \int_{-\infty}^{\infty} \frac{1}{v} f_{\mu_n^{\star} | X_1}(z \nvert x) f_{X_1}(x) dx \\
                        &= \int_{-\infty}^{\infty}  \frac{1}{v} \binom{n}{\lfloor n/2 \rfloor} (F_{X_1}(z))^{\lfloor n/2 \rfloor} f_{X_1}(z) (1 - F_{X_1}(z))^{\lfloor n/2 - 1 \rfloor} f_{X_1}(x) dx \\
                        &\leq C\, \|f_{X_1}\|_{\infty}  \underbrace{\int_{-\infty}^{\infty} \frac{1}{v} f_{\mu_{n-2}^{\star}}(z) dx}_{=~1} \\
                        &= C\, \|f_{X_1}\|_{\infty} < \infty.
                    \end{align*}
                    In the inequality above, we took $C \circeq \sup_{n\geq 3} \binom{n}{\lfloor n/2 \rfloor} / \binom{n-2}{\lfloor (n-2)/2 \rfloor}$, which is finite by Stirling's formula.
                    When $v\in (0,1]$ and $n\geq 4$ is even, we can apply a similar argument and also obtain a uniform bound.
                    Finally, when $v = 0$ and $n\in \N$, $f_{X_1 - v \mu_n^{\star}}(u) = f_{X_1}(u) \leq 1/(4\sigma)$.

                    In summary, $f_{X_1 - v \mu_n^{\star}}(u)$ is uniformly bounded in $u\in \R$, $n \geq 3$ and $v\in [0,1]$, which proves $(E.4.\text{unif})$ with any $\alpha_0 > 0$ and any $N_1 \geq 3$.
                \item[(E.6.\text{unif})] In our case, this is trivial because the conditional density $f_{X_1 - v \mu_n^{\star}|X_1}(\cdot \nvert x)$ exists for all $x\in \R$, all $n \geq 2$ and all $v\in (0,1]$.
                \item[(H.1)] The function $h$ is continuous on $\R\backslash \{0\}$, so $\mathcal{D}_h = \emptyset$ and thus $\PP(X_1\in \mathcal{D}_h) = 0$.
                \item[(H.2)] $\EE\big|h(X_1)\big| \leq \int_{|x| \leq 1} |\log|x|| \frac{1}{4\sigma} dx + \int_{|x| \geq 1} |x| f_{X_1}(x) dx \leq \frac{2}{4\sigma} + 2\sigma < \infty$.
                \item[(H.3)] For all $x_0\in \R\backslash\{0\}$, $\limsup_{x\to x_0} |h(x)| < \infty$. This is obvious.
                \item[(H.4)] $\int_{|u| \leq \alpha_0} |\log|u|| du < \infty$ is true for any $\alpha_0 > 0$ since $\int_{|u| \leq 1} |\log|u|| du = 2$.
                \item[(H.5)]
                    \begin{enumerate}
                        \item This is obviously true for any $\beta_0 > 0$ (use the fundamental theorem of calculus).
                        \item For any $\gamma > 0$ and any $\beta_0 > \gamma$, the supremum $\sup_{|t| \leq \gamma} |h(X_1 - t)|\bb{1}_{\{|X_1 - t| \geq \beta_0\}}$ is attained at the boundary with probability $1$ (not necessarily the same end of the boundary for different $\omega$'s). Therefore, take $M = |h(X_1 - \gamma)|\bb{1}_{\{|X_1 - \gamma| \geq \beta_0\}} + |h(X_1 + \gamma)|\bb{1}_{\{|X_1 + \gamma| \geq \beta_0\}}$. It is easy to show that $\EE[M] < \infty$ because $|\log|x|| \leq |x|$ for $|x| \geq 1$ and $\int_{|x| \geq (1 \vee \beta_0)} |x| f_{X_1 \pm \gamma}(x) dx < \infty$.
                        \item We need to verify this condition for $p = 1$ since this is the $p$ that we used above to verify the conditions of Lemma \ref{lem:m.estimator.exponential.bound}.
                            First, $\lim_{|\beta|\to \infty} |h(\beta)| e^{-|x - \beta|^p} = 0$ is true for all $x\in \R$ and all $p > 0$ (true in particular for $p = 1$).
                            For the second part, assume that $\beta \geq 1$. We have
                            \begin{align}\label{eq:prop:example.1.H.6}
                                \EE[e^{-|X_1 - \beta|}]
                                &= \int_{(-\infty,0) \cup (0,\beta) \cup (\beta,\infty)} e^{-|x-\beta|} \cdot \frac{1}{4\sigma} e^{-\frac{1}{2\sigma} |x|} dx \notag \\
                                &\leq \frac{1}{2} e^{-|\beta|} \underbrace{\int_{-\infty}^0 \frac{1}{2\sigma} e^{-\frac{1}{2\sigma} |x|} dx}_{=~1} \, + \, \frac{|\beta|}{4\sigma} e^{-(1 \wedge \frac{1}{2\sigma}) |\beta|} + \frac{1}{4\sigma} e^{-\frac{1}{2\sigma} |\beta|} \underbrace{\int_{\beta}^{\infty} e^{-|x - \beta|} dx}_{=~1} \notag \\
                                &\leq \frac{|\beta|}{2} \left(1 \vee \frac{1}{2\sigma}\right) e^{-(1 \wedge \frac{1}{2\sigma})|\beta|}.
                            \end{align}
                            By the symmetry of $f_{X_1}$, we also have \eqref{eq:prop:example.1.H.6} for $\beta \leq -1$.
                            Hence, for any $\beta_0 \geq 1$,
                            \begin{equation*}
                                \sup_{|\beta| \geq \beta_0} \EE\left[\Big(|h(\beta)| e^{-|X_1 - \beta|}\Big)^2\right] < \infty,
                            \end{equation*}
                            which is a well-known sufficient condition for the uniform integrability of $\{|h(\beta)| e^{-|X_1 - \beta|}\}_{|\beta| \geq \beta_0}$, see e.g.\hspace{-0.5mm} \cite[Corollary 6.21]{MR3112259}.
                        \item Take any $\beta_0 \geq 1$, then \eqref{eq:prop:example.1.H.6} implies
                            \begin{equation*}
                                \int_{|u| \geq \beta_0} \EE\big[|h'(u)| e^{-|X_1 - u|}\big] du \leq \frac{1}{\beta_0} \int_{|u| \geq \beta_0} \EE\big[e^{-|X_1 - u|}\big] du < \infty.
                            \end{equation*}
                        \item Take any $\beta_0 \geq 1$, then, for all $|u| \geq \beta_0$,
                            \begin{equation*}
                                -\text{sign}(u) \cdot \text{sign}(h(u)) \cdot h'(u)
                                = -\text{sign}(u) \cdot \text{sign}(u) \cdot \frac{1}{|u|} \leq 0.
                            \end{equation*}
                    \end{enumerate}
            \end{description}
            This ends the proof.
        \end{proof}

\section*{Acknowledgements}

We would like to thank the anonymous reviewer for helpful comments that contributed to improving the final version of the paper.


\bibliographystyle{elsarticle-harv}

\end{document}